\def\widebar{\accentset{{\cc@style\underline{\mskip10mu}}}}
\newcommand{\R}{\mathbb{R}}
\renewcommand{\a}{\alpha}
\renewcommand{\b}{\beta}
\newcommand{\g}{\gamma}
\renewcommand{\t}{\theta}
\newcommand{\na}{\nabla}
\newcommand{\nap}{\nabla^{\perp}}
\DeclareMathOperator{\Ric}{Ric}
\DeclareMathOperator{\Hess}{Hess}
\DeclareMathOperator{\dvol}{dvol}
\DeclareMathOperator{\supp}{supp}
\newcommand{\lang}{\langle}
\newcommand{\rang}{\rangle}
\numberwithin{equation}{section}       
\theoremstyle{plain}
\newtheorem{theorem}{Theorem}[section]
\newtheorem{proposition}[theorem]{Proposition}
\newtheorem{lemma}[theorem]{Lemma}
\newtheorem{corollary}[theorem]{Corollary}
\theoremstyle{definition}
\theoremstyle{remark}
\newtheorem{remark}[theorem]{Remark}
\newtheorem*{acknowledgement}{\bf{Acknowledgment}} 
\title[Remarks on stability of translating solitons]{Remarks on topology of stable translating solitons}\date{\today}
\author[K. Kunikawa]{Keita Kunikawa$^*$}
\author[S. Saito]{Shunsuke Saito$^{\dagger}$}
\address{Advanced Institute for Materials Research \\
Tohoku University \\
Katahira \\
Aoba-ku \\
Sendai 980-8578 \\
Japan}
\email{keita.kunikawa.e2@tohoku.ac.jp \!$^*$}
\email{shunsuke.saito.c4@tohoku.ac.jp \!$^{\dagger}$}
\subjclass[2010]{Primary: 53C42, Secondary: 53C21}
\keywords{translating solitons, stability, weighted harmonic forms}
\begin{document}

\begin{abstract}
We show that any complete $f$-stable translating soliton $M$ admits no codimension one cycle which does not disconnect $M$. As a corollary, it follows that any two dimensional complete $f$-stable translating soliton has genus zero. 
\end{abstract}

\maketitle

\section{Introduction}\label{intro}
A \textit{translating soliton} (\textit{translator} for short) is an oriented, connected smoothly immersed hypersurface $x:M^n \to \R^{n+1}$ on which its mean curvature vector $H$ satisfies  
\begin{align*} 
H=T^\perp, 
\end{align*}
where $T \in \R^{n+1}$ is a fixed unit length constant vector and $T^\perp$ denotes its normal projection onto the normal bundle $T^\perp M$. Translators are known as type II singularity model of the mean curvature flow and hence classification of translators gives us a better understanding of singularities. In \cite{MSS15}, Mart\'in, Savas-Halilaj and Smoczyk studied topological aspects of translators, and they asked whether there exists a complete translator which has finite genus when $\dim M = 2$. Later, Smith \cite{Smi15} gave an answer to this question. Adopting desingularisation technique, he constructed new complete embedded translators which have arbitrary finite genus and three ends. Note that complete embedded translators of infinite genus were already known by Nguyen in \cite{Ngu09}, \cite{Ngu13} before the works \cite{MSS15}, \cite{Smi15}.   

On the other hand, translators are critical points of the weighted volume functional $\mathcal{A}_T(M)$ with respect to the weighted measure $e^{\lang x, T \rang}\dvol_M$. In the following, we set $f(x):=-\lang x, T\rang\in C^\infty(M)$ and $\dvol_{f}:=e^{-f}\dvol_M$. A translator $x:M^n \to \R^{n+1}$ is called \textit{$f$-stable} if the second derivative of $\mathcal{A}_T$ is nonnegative for all compactly supported normal variations of $x$.  
Shahriyari \cite{Sha15} studied the $f$-stability of translators and he showed that any translating graph $M^2\subset \R^3$ must be $f$-stable. Later, this fact was generalized to higher dimensions by Xin \cite{Xin15}. Clearly, every complete graph has no genus (when $\dim M=2$) and only one end. Hence, roughly speaking, simple topology implies $f$-stability. Our interest is the converse, that is, topological properties of translators under $f$-stability. In this direction, Impera and Rimoldi showed the following. 
\begin{proposition}[Impera-Rimoldi \cite{IR17}]\label{1end}
For $n\geq 2$, any complete $f$-stable translator $x:M^n \to \R^{n+1}$ has at most one end. 
\end{proposition}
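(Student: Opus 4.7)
The plan is to argue by contradiction: assume $M$ has at least two ends and produce a non-constant function whose existence is forbidden by $f$-stability. I would first invoke the weighted analog of Li-Tam theory for smooth metric measure spaces (developed for gradient Ricci solitons by Munteanu-Wang and others) to build a non-constant bounded $f$-harmonic function $u: M \to \R$, \ie\ $\Delta_f u := \Delta u - \lang \na f, \na u \rang = 0$, separating two distinct ends of $M$ and with finite weighted Dirichlet energy $\int_M |\na u|^2 \, \dvol_{f} < \infty$. The bound $|\na f|=|T^\top| \leq 1$ plays a central role in the existence step.

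The key geometric ingredient is a clean formula for the Bakry-\'Emery Ricci tensor of a translator. Since $\na f = -T^\top$, one computes $\Hess f = -H A$, where $H=\lang T,\nu\rang$ is the (scalar) mean curvature and $A$ is the second fundamental form; combined with the Gauss equation $\Ric = H A - A^2$, this yields
\begin{equation*}
\Ric_f := \Ric + \Hess f = -A^2, \qquad \Ric_f(X,X) \geq -|A|^2 |X|^2.
\end{equation*}
The weighted Bochner formula applied to the $f$-harmonic $u$ then reads
\begin{equation*}
\tfrac{1}{2}\Delta_f |\na u|^2 = |\na^2 u|^2 + \Ric_f(\na u,\na u) \geq |\na^2 u|^2 - |A|^2|\na u|^2.
\end{equation*}

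Next I would insert the test function $\varphi = \eta|\na u|$, with $\eta$ a compactly supported cutoff, into the $f$-stability inequality $\int_M|\na\varphi|^2\dvol_f \geq \int_M |A|^2\varphi^2\dvol_f$, expand the left-hand side, and compare with the integrated Bochner inequality against $\eta^2$. The cross terms and $|A|^2$ contributions then cancel, leaving
\begin{equation*}
\int_M|\na\eta|^2|\na u|^2\, \dvol_f \geq \int_M \eta^2\bigl(|\na^2 u|^2 - |\na|\na u||^2\bigr)\, \dvol_f.
\end{equation*}
By the refined Kato inequality (which requires $n \geq 2$), the right-hand side dominates $\tfrac{1}{n-1}\int\eta^2|\na|\na u||^2\dvol_f$; meanwhile, logarithmic cutoffs $\eta$ adapted to the finite weighted Dirichlet energy of $u$ drive the left-hand side to zero, forcing $|\na u|$ to be constant, hence zero, contradicting the non-constancy of $u$.

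The principal obstacle is the production and bookkeeping of the auxiliary $f$-harmonic function: one must verify that a weighted Li-Tam type decomposition is available on translators (treating the $f$-non-parabolic case via weighted Green's function estimates and the $f$-parabolic case via weighted Sobolev or volume comparison arguments), and ensure that the resulting $u$ has enough energy control to legitimize the cutoff procedure. A secondary subtlety is that the refined Kato inequality has to be justified for $f$-harmonic, rather than merely harmonic, functions; the drift correction $\lang\na f,\na u\rang$ must be absorbed, and the uniform bound $|\na f|\leq 1$ is precisely what makes this possible. The algebraic Bochner–stability cancellation itself is formal and parallels classical arguments of Fischer-Colbrie-Schoen and Cao-Shen-Zhu for stable minimal hypersurfaces.
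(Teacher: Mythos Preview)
Your Bochner--stability core is sound, but the endgame via refined Kato has a genuine gap. The inequality $|\na^2 u|^2 \geq \tfrac{n}{n-1}|\na|\na u||^2$ hinges on $\Delta u = 0$; for an $f$-harmonic $u$ one has instead $\Delta u = \lang\na f,\na u\rang$, and carrying this through the pointwise proof of refined Kato produces an additive error of order $|\na f|^2|\na u|^2 \leq |\na u|^2$. After integrating against $\eta^2$ and sending the cutoff to $1$, this error contributes the full weighted Dirichlet energy, which is finite but not zero. You therefore obtain only $\int_M|\na|\na u||^2\dvol_f < \infty$, not $|\na|\na u||\equiv 0$, so the contradiction does not close. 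Your remark that $|\na f|\leq 1$ ``makes this possible'' is exactly where the argument stalls.

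The paper's route (sketched in the remark following Theorem~\ref{mainth}) diverges precisely here. Instead of passing to the inequality $\Ric_f(\na u,\na u)\geq -|A|^2|\na u|^2$ and discarding information, it keeps the identity $\Ric_f(\xi,\xi)=-\lang S(\xi),S(\xi)\rang$ and carries the nonnegative defect $W(\xi)=|A|^2|\xi|^2-\lang S(\xi),S(\xi)\rang$ through the stability computation. The same cutoff limit then forces both $K(\xi)\equiv 0$ and $W(\xi)\equiv 0$; it is $W\equiv 0$, not any Kato refinement, that does the work: it pins the second fundamental form to rank at most one, hence the scalar curvature vanishes, and the Mart\'in--Savas-Halilaj--Smoczyk classification leaves only hyperplanes and grim reaper cylinders, each with a single end. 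The paper also produces the harmonic object more cheaply than Li--Tam: two ends give $H^1_c(M)\neq\{0\}$, and the weighted Sobolev inequality available on translators makes the natural map $H^1_c(M)\to\cH^1_f(M)$ injective (Proposition~\ref{sobolev}), yielding the nontrivial $L^2_f$ $f$-harmonic $1$-form directly. If you want to salvage your outline, the fix is to retain the $W$ term rather than reach for refined Kato.
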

Their result indicates that $f$-stable translators must be topologically simple (at least in the sense of ends). As an immediate consequence, all examples constructed by Smith \cite{Smi15} can not be $f$-stable since they have three ends. 

In the present paper we also confirm topological simplicity of $f$-stable translators from a view point of a relation between weighted $L^2$ harmonic $1$-forms and codimension one cycles (the details are explained in a later section). 
\begin{theorem} \label{main}
For $n\geq 1$, any complete $f$-stable translator $x:M^n \to \R^{n+1}$ admits no codimension one cycle which does not disconnect $M$. 
\end{theorem}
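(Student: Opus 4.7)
My plan is a proof by contradiction: assuming $M$ carries a codimension one cycle $\Sigma$ that does not disconnect $M$, I produce a non-trivial weighted $L^2$ harmonic $1$-form $\omega$ on $M$, and then use the weighted Bochner formula together with the $f$-stability inequality and Kato's inequality to force $\omega\equiv 0$, a contradiction.

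\textbf{Step 1: producing a non-trivial weighted $L^2$ harmonic $1$-form.}  Since $\Sigma$ does not disconnect $M$, there is a smooth loop $\gamma\subset M$ meeting $\Sigma$ transversally in a single point. Poincar\'e--Lefschetz duality on the non-compact manifold $M$ produces a smooth, compactly supported closed $1$-form $\omega_0$ Poincar\'e-dual to $\Sigma$, in particular with $\int_\gamma\omega_0\ne 0$. Being compactly supported, $\omega_0\in L^2_f := L^2(M,\,e^{-f}\dvol_M)$. I then solve the drift Poisson equation $\Delta_f u = \delta_f\omega_0$ for the drift Laplacian $\Delta_f := \Delta - \lang\nabla f, \nabla\cdot\rang$ on $M$ by exhausting with relatively compact domains $\Omega_k\uparrow M$, solving Dirichlet problems, and extracting a weak limit via weighted energy estimates, to obtain $u$ with $du\in L^2_f$. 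The form $\omega:=\omega_0+du$ is then closed, weighted co-closed, lies in $L^2_f$, and satisfies $\int_\gamma\omega=\int_\gamma\omega_0\ne 0$, so $\omega\not\equiv 0$.

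\textbf{Step 2: Bochner and stability force $\omega\equiv 0$.}  A direct computation using $f(x)=-\lang x, T\rang$ and the Gauss equation gives $\Hess f = -Hh$ and $\Ric = Hh - h^2$, whence the Bakry--\'Emery Ricci tensor of the translator is
\begin{equation*}
\Ric_f := \Ric + \Hess f = -h^2, \qquad h^2(X,X) := \sum_k h(X,e_k)^2 \le |A|^2\,|X|^2.
\end{equation*}
Applying the weighted Bochner formula to the weighted harmonic form $\omega$ and integrating against $e^{-f}\dvol_M$ (using that $\Delta_f$ is self-adjoint on $L^2_f$ and $\omega\in L^2_f$) yields
\begin{equation*}
\int_M |\nabla\omega|^2\, e^{-f}\dvol_M = -\int_M \Ric_f(\omega,\omega)\, e^{-f}\dvol_M \le \int_M |A|^2\,|\omega|^2\, e^{-f}\dvol_M.
\end{equation*}
Testing $f$-stability against $\eta_R|\omega|$ for standard cut-offs with $|\nabla\eta_R|\le C/R$ and letting $R\to\infty$ (using $|\omega|\in L^2_f$ so that the gradient-of-cutoff error vanishes) gives
\begin{equation*}
\int_M |A|^2\,|\omega|^2\, e^{-f}\dvol_M \le \int_M |\nabla|\omega||^2\, e^{-f}\dvol_M.
\end{equation*}
Kato's inequality $|\nabla|\omega||^2\le|\nabla\omega|^2$ then pinches these estimates, forcing equality throughout; in particular $|\nabla|\omega|| = |\nabla\omega|$ pointwise on $\{\omega\ne 0\}$. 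Kato equality combined with $d\omega = 0$ implies $\nabla\omega = \lambda\,\omega\otimes\omega$ and $d|\omega| = \lambda|\omega|\,\omega$ for a scalar function $\lambda$, so that $\omega$ becomes locally exact with primitive depending only on $|\omega|$; integrating along the loop $\gamma$ then yields $\int_\gamma\omega = 0$, contradicting $\int_\gamma\omega\ne 0$.

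\textbf{Main obstacle.}  The technically hardest point is Step~1: the weight $e^{-f}=e^{\lang x,T\rang}$ can grow exponentially along $M$, so the classical $L^2$ Hodge--de Rham theory on complete manifolds must be carefully adapted to the drift Laplacian $\Delta_f$ and the weighted measure $e^{-f}\dvol_M$. In particular, one must verify that the exhaustion procedure produces a solution $u$ with $du\in L^2_f$, and that the non-trivial cohomology class represented by $\omega_0$ is genuinely detected by the weighted $L^2$ harmonic representative $\omega$.
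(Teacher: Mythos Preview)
Your Step~1 and the Bochner/stability/Kato sandwich in Step~2 are essentially the paper's argument. The paper obtains the non-trivial $L^2_f$-harmonic $1$-form by invoking a weighted version of Dodziuk's theorem (alternatively via the weighted Sobolev inequality of Impera--Rimoldi), while you sketch a direct construction; either is acceptable. Once equalities are forced in your chain (A)--(B)--(C), you obtain exactly the two pointwise identities the paper derives: Kato equality $|\nabla|\omega||=|\nabla\omega|$ \emph{and} $h^2(\omega^\sharp,\omega^\sharp)=|A|^2|\omega|^2$ (the paper's ``$W(\xi)=0$''). A minor technical difference is that the paper performs the integration by parts with cutoffs \emph{inside} the stability inequality rather than integrating the Bochner identity globally, which sidesteps having to justify $\int_M\Delta_f|\omega|^2\,e^{-f}=0$ directly.

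The genuine gap is your closing step. From Kato equality and $d\omega=0$ you correctly obtain $\nabla\omega=\lambda\,\omega\otimes\omega$ on $\{\omega\ne0\}$, but your claim that ``$\omega$ becomes locally exact with primitive depending only on $|\omega|$, hence $\int_\gamma\omega=0$'' fails when $\lambda\equiv0$: then $\omega$ is a non-zero parallel $1$-form, $|\omega|$ is constant, and there is no such primitive---parallel closed $1$-forms can certainly have non-zero periods (e.g.\ $d\theta$ on a flat cylinder). One can rule out the globally parallel case on a translator (constant non-zero norm is incompatible with $\omega\in L^2_f$ once $M$ has infinite $f$-volume), but you would still need to control the set where $d|\omega|$ vanishes along $\gamma$, and you do not. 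The paper closes the argument via the \emph{other} equality you implicitly derived but discarded: $|S\xi|^2=|A|^2|\xi|^2$ forces the second fundamental form to have rank $\le1$ wherever $\xi\ne0$, hence the scalar curvature vanishes; the classification of Mart\'in--Savas-Halilaj--Smoczyk then says $M$ is a hyperplane or a grim reaper cylinder, both diffeomorphic to $\R^n$, so by Jordan--Brouwer every codimension-one cycle disconnects---the desired contradiction.
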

Especially in the surface case ($n=2$), Theorem \ref{main} implies the following: 	
\begin{corollary}
Any two dimensional complete $f$-stable translator $x:M^2 \to \R^3$ has genus zero.  
\end{corollary}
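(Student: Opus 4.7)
The corollary is a direct topological application of Theorem \ref{main}, so the plan is short. A translator is by definition oriented and connected, so $M^2$ is a connected oriented surface (possibly non-compact, but with at most one end by Proposition \ref{1end}). I would argue by contraposition: assuming $M$ has positive genus, I will exhibit a non-separating codimension one cycle, which will contradict Theorem \ref{main}.

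First I would recall the standard topological fact that on any connected oriented surface of positive genus there exists a non-separating simple closed curve. Concretely, if $M$ contains a handle, one can take a simple closed curve $\gamma$ that runs once through the handle (a meridian, say, of any embedded torus-with-disk-removed or genus-one subsurface). Cutting $M$ along $\gamma$ leaves a connected surface, so $\gamma$ does not disconnect $M$. This works equally well whether $M$ is compact or has one end, since all one needs is a single embedded handle.

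Second, such a $\gamma$ is a smooth embedded closed $1$-submanifold of the $2$-manifold $M$, hence a codimension one cycle in $M$. Theorem \ref{main} rules out the existence of any codimension one cycle that does not disconnect $M$, so this $\gamma$ cannot exist. Therefore $M$ must have genus zero.

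The proof is essentially immediate from the previous theorem; there is no real obstacle beyond correctly invoking the standard fact that positive genus of a connected oriented surface is equivalent to the existence of a non-separating simple closed curve, which for non-compact surfaces can be verified by restricting to a handle in the Kerékjártó–Richards classification or, more elementarily, by noting that a genus-$g$ handle of $M$ supports the usual symplectic basis $\{a_i,b_i\}$ of homology, each class of which is represented by a non-separating simple closed curve in $M$.
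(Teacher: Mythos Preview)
Your argument is correct and is exactly what the paper has in mind: the corollary is stated as an immediate consequence of Theorem \ref{main}, and the only content is the standard fact that a connected oriented surface of positive genus carries a non-separating simple closed curve, i.e.\ a codimension one cycle ruled out by the theorem. There is nothing to add.
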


\begin{remark}
Recently, Impera and Rimoldi \cite{IR18} showed a quantitative relation between the $f$-index and the genus of translators. 
\end{remark}

\begin{acknowledgement}
	The first author would like to thank Miyuki Koiso and Reiko Miyaoka for their valuable comments and discussion on stability of translators. The second author was supported by Structural Materials for Innovation Strategic Innovation Promotion Program D72.
\end{acknowledgement}

\section{Preliminaries} 
Throughout the paper, we assume that $x:M^n \to \R^{n+1}$ is a codimension one, smooth complete translator with the induced metric $g:=x^*\lang \cdot, \cdot\rang$ and the second fundamental form $A$. Also we assume that $M$ is oriented and connected. 
\subsection{Stability of translators} 
As mentioned in the introduction, a translator is defined by the equation $H=T^\perp$ for some unit length constant vector $T\in \R^{n+1}$. By the usual first variation formula, it is not difficult to see that for a hypersurface being a translator is equivalent to being a critical point of the weighted volume functional $\mathcal{A}_T(M)$. Then the \textit{$f$-stability} (or it is also called \textit{$L$-stability}) of a translator is defined by 
\begin{align*} 
\frac{d^2}{dt^2}\Big|_{t=0}\mathcal{A}_T(x_t(M))\geq 0 
\end{align*}
for any compactly supported normal variation $x_t:M \to \R^{n+1}$ with $x_0=x$. Let $V\in T^\perp M$ be a variation vector field of such a compactly supported normal variation $x_t$. Then $f$-stability for translators can be written as 
\begin{align} \label{stab}
\int_M|\nap V|^2-|A|^2|V|^2\dvol_f\geq 0,  
\end{align}
where $\nap$ denotes the normal connection on $T^\perp M$. See \cite{IR15}, \cite{IR17} or \cite{Sha15} for more details. 

\subsection{Weighted Laplacian} Let $A^p(M)$ be the space of smooth $p$-forms on $M$ and $A^p_c(M)$ be the space of compactly supported $p$-forms on $M$. For $\a, \b\in A^p(M)$, let 
\begin{align*}
(\a, \b)_g:=\star_g(\a\wedge \star_g \b) 
\end{align*}
be a standard pointwise inner product on forms, where $\star_g$ is the Hodge star operator with respect to $g$. Then for every $\a, \b\in A^p_c(M)$, we define the $L^2_f$ inner product by 
\begin{align*}
\lang \a, \b \rang_{L^2_f}:=\int_M(\a, \b)_g \dvol_f.   
\end{align*}
The completion of $A^p_c(M)$ with respect to the norm $||\cdot||^2_{L^2_f}=\lang \cdot, \cdot \rang_{L^2_f}$ is denoted by $L^2_fA^p(M)$. Then the exterior derivative $d:A^p_c(M) \to A^{p+1}_c(M)$ is densely defined operator on $L^2_fA^p(M)$, and it has the formal adjoint $\delta_f$ of $d$ with respect to the inner product $\lang \cdot, \cdot\rang_{L^2_ f}$ on $A^{p+1}_c(M)$. Now we define the \textit{weighted Laplacian} acting on $A^p_c(M)$ by $d\delta_f+\delta_fd$ which is formally self-adjoint and densely defined on $L^2_fA^p(M)$. Note that $d\delta_f+\delta_fd$ has a unique closed extension which is denoted by $\Delta_f$ and it is self-adjoint. A $p$-form $\a \in A^p(M)$ is called \textit{$L^2_f$ $f$-harmonic} if $\a\in L^2_fA^p(M)$ and satisfies $\Delta_f\a=0$. The space of all such $p$-forms is denoted by 
\begin{align*}
\mathcal{H}^p_f(M):=\{\a\in L^2_fA^p(M)\; |\; \Delta_f\a=0\}. 
\end{align*} 
It is known that $\mathcal{H}^p_f(M)\subset A^p(M)$, namely, $L^2_f$ $f$-harmonic forms are smooth. See \cite{Bue99} for details.

\section{Weighted harmonic forms and topology} 
In this section, we assume that $M$ is an oriented connected complete  Riemannian manifold, but not necessarily be translator. 

The $p$-th de Rham cohomology group with compact support on $M$ is defined by 
\begin{align*}
H^p_c(M)=\frac{\{\a \in A^p_c(M)\;|\; d\a=0\}}{dA^{p-1}_c(M)}. 
\end{align*}
We also consider the space of $L^2_f$ closed forms: 
\begin{align*}
L^2_fZ^p(M):=\{\a \in L^2_fA^p(M)\;|\; d\a=0\}, 
\end{align*}
where it is understood that $d\a=0$ holds weakly. Then we define the \textit{reduced $L^2_f$ cohomology} by 
\begin{align*}
L^2_fH^p(M):=\frac{L^2_fZ^p(M)}{\overline{dA^{p-1}_c(M)}}, 
\end{align*}
where $\overline{dA^{p-1}_c(M)}$ is the closure of $dA^{p-1}_c(M)$ with respect to the norm $||\cdot||_{L^2_f}$. The reduced $L^2_f$ cohomology on a complete Riemannian manifold is isomorphic to the space of $L^2_f$ $f$-harmonic forms (see \cite{Bue99}): 
\begin{align*}
\mathcal{H}^p_f(M)\cong L^2_fH^p(M). 
\end{align*}

\subsection{Weighted $L^2$ harmonic forms and weighted Sobolev inequality}
First, we consider the following approximation theorem (see \cite{Car07}). 
\begin{lemma} \label{appro}
Let $\a \in L^2_fZ^1(M)\cap A^1(M)$ and suppose that $\a$ is zero in $L^2_fH^1(M)$, namely, there is a sequence of smooth functions $(u_i)_i$ in $C^\infty_c(M)$ such that 
\begin{align*}
du_i \to \a \quad (i \to \infty)\quad \textup{ in $L^2_f$}.  
\end{align*}
Then there exists $u\in C^\infty(M)$ such that $\a =du$. 
\end{lemma}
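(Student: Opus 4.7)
The plan is to construct the primitive $u$ as the $L^2_{loc}$-limit of suitably normalized versions of the $u_i$, and then promote it to smoothness via elliptic regularity. Fix once and for all a connected, relatively compact open set $\Omega_0 \subset M$ with smooth boundary, and set $c_i := |\Omega_0|^{-1}\int_{\Omega_0} u_i \, \dvol$ and $w_i := u_i - c_i$. Then $dw_i = du_i$ still converges to $\alpha$ in $L^2_f$, and $\int_{\Omega_0} w_i = 0$ by construction. Since the weight $e^{-f}$ is smooth and bounded between positive constants on any relatively compact subset of $M$, the $L^2_f$-norm is equivalent to the unweighted $L^2$-norm there; in particular $(dw_i)$ is Cauchy in $L^2(\Omega)$ for every relatively compact $\Omega \subset M$.

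The main technical step is to show that $(w_i)$ itself is Cauchy in $L^2_{loc}(M)$. Choose an exhaustion $\Omega_0 \subset \Omega_1 \subset \cdots$ by connected, relatively compact open sets with smooth boundary and $\bigcup_k \Omega_k = M$. On $\Omega_0$, the normalization together with the Poincar\'e--Wirtinger inequality yields
\begin{align*}
\|w_i - w_j\|_{L^2(\Omega_0)} \leq C_0 \, \|du_i - du_j\|_{L^2(\Omega_0)} \to 0.
\end{align*}
Inductively in $k$, applying Poincar\'e--Wirtinger on $\Omega_{k+1}$ controls $w_i - w_j$ modulo its mean $m_{ij}$ over $\Omega_{k+1}$ by $C_{k+1}\|du_i - du_j\|_{L^2(\Omega_{k+1})}$. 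Restricting this estimate to $\Omega_0 \subset \Omega_{k+1}$ and combining with the previously established Cauchyness of $(w_i)$ on $\Omega_0$ forces $m_{ij} \to 0$, whence $(w_i)$ is itself Cauchy in $L^2(\Omega_{k+1})$.

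Consequently $w_i \to u$ in $L^2_{loc}(M)$ for some $u \in L^2_{loc}(M)$. Since $dw_i \to \alpha$ in $L^2_{loc}$ as well and exterior differentiation is continuous on distributions, we obtain $du = \alpha$ in the distributional sense on $M$. Because $\alpha$ is smooth, $u$ solves the elliptic equation $\Delta u = -\delta \alpha$ with smooth right-hand side; Weyl's lemma then implies $u \in C^\infty(M)$, and the identity $du = \alpha$ holds classically.

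I expect the principal obstacle to be the bookkeeping in the second step: propagating the single normalization performed on $\Omega_0$ outward through the exhaustion without introducing fresh additive constants at each scale. The point is that Poincar\'e on a larger $\Omega_{k+1}$ a priori only controls $w_i - w_j$ modulo a constant, but restriction to $\Omega_0$ and comparison with the already-known smallness there forces that constant to vanish in the limit. Once this is handled, the passage from the distributional identity $du = \alpha$ to smoothness of $u$ is routine elliptic regularity.
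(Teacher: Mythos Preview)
Your proposal is correct but takes a genuinely different route from the paper. The paper argues via Poincar\'e duality: it shows $\int_M \alpha \wedge \eta = 0$ for every closed $\eta \in A^{n-1}_c(M)$ by rewriting this pairing as an $L^2_f$ inner product $\langle \alpha, \theta\rangle_{L^2_f}$, replacing $\alpha$ by $du_i$ using the hypothesis, and applying Stokes' theorem to $d(u_i\eta)$; the de Rham isomorphism then forces $\alpha$ to be exact. Your approach is direct and constructive: you build the primitive as an $L^2_{loc}$ limit of the normalized $u_i$ via Poincar\'e--Wirtinger on an exhaustion, and then upgrade to smoothness. The paper's argument is shorter and sidesteps exactly the bookkeeping you flag as the main obstacle, at the cost of invoking de Rham/Poincar\'e duality as a black box; your argument is analytically self-contained and actually produces the primitive, but is longer. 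One small simplification on your side: the final appeal to Weyl's lemma is more than you need---once $du=\alpha$ holds distributionally with $\alpha$ smooth and closed, on any simply connected coordinate ball $\alpha$ has a smooth primitive $v$, and $d(u-v)=0$ in the sense of distributions forces $u-v$ to be constant, so $u$ is smooth directly.
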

\begin{proof}
By the Poincar\'e duality, it is enough to show that 
\begin{align*}
\int_M\a \wedge \eta =0 
\end{align*}
for every closed form $\eta\in A^{n-1}_c(M)$. For such $\eta$, define a $1$-form $\t\in A^1(M)$ by $\t:=(-1)^{n-1}e^f\star_g\eta$. Then 
\begin{align*}
\int_M\a \wedge \eta&=\int_M\a \wedge \star_g(e^{-f}\t)=\int_M(\a, \t)_g\dvol_f\\
&=\lim_{i \to \infty}\int_M(du_i, \t)_g\dvol_f=\lim_{i \to \infty}\int_M du_i\wedge \eta
=\lim_{i\to \infty}\int_Md(u_i\eta)=0. 
\end{align*}
\end{proof}

If the weighted Sobolev inequality holds on $M$, we obtain the following result as an application of the approximation theorem (Lemma \ref{appro}). 
\begin{proposition}\label{sobolev}
Let $M$ be an $n$-dimensional complete Riemannian manifold. Suppose that $M$ supports the $L^1_f$ Sobolev inequality, that is, there exists a constant $C>0$ such that for every compactly supported nonnegative function $h\in C^\infty_c(M)$, 
\begin{align*}
||h||_{L^{\frac{n+1}{n}}_f}\leq C||dh||_{L^1_f}. 
\end{align*}
Then the natural map 
\begin{align*}
H^1_c(M) \to L^2_fH^1(M)\cong \mathcal{H}^1_f(M)
\end{align*}
is injective. 
\end{proposition}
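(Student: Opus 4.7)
The plan is to show that any closed $\alpha \in A^1_c(M)$ whose class vanishes in $L^2_f H^1(M)\cong \mathcal{H}^1_f(M)$ is exact as an element of $H^1_c(M)$, i.e.\ that $\alpha = dv$ for some $v \in C^\infty_c(M)$. By hypothesis $\alpha$ lies in $\overline{dA^0_c(M)}$, so Lemma \ref{appro} already produces some $u \in C^\infty(M)$ with $\alpha = du$; the task is to show that $u$ equals a single constant outside a compact set, after which $v := u - a$ is the desired compactly supported primitive.

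Let $K := \supp \alpha$. Since $du = 0$ off $K$, the function $u$ takes a single constant value $c_j$ on each connected component $C_j$ of $M \setminus K$. A brief point-set check, using $\partial C_j \subset K$, shows that for every $p \in C_j$ the geodesic ball of radius $\dist(p,K)$ centred at $p$ is contained in $C_j$. Consequently every unbounded $C_j$ contains balls of arbitrarily large radius, while every bounded $C_j$ sits inside a single large ball $B_R \supset K$. The remaining issue is therefore to show that all constants $c_j$ attached to unbounded components coincide.

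For this I would first promote the hypothesis to an $L^2_f$-Sobolev inequality. Applying the given $L^1_f$ Sobolev inequality to $|h|^\beta$ with $\beta = 2n/(n-1)$, and handling the positive and negative parts separately (the classical Moser trick), yields, for $n \geq 2$,
\[
\|h\|_{L^q_f} \leq C' \|dh\|_{L^2_f}, \qquad h \in C^\infty_c(M),
\]
with $q = 2(n+1)/(n-1) > 2$. Pick an approximating sequence $u_i \in C^\infty_c(M)$ with $du_i \to \alpha$ in $L^2_f$. Applied to the compactly supported differences $u_i - u_j$, this inequality makes $\{u_i\}$ Cauchy in $L^q_f$, hence convergent to some $U \in L^q_f$. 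On the other hand, on every relatively compact $\Omega$ a local Poincar\'e inequality, together with $du_i \to du$ in $L^2_f$, gives $u_i - (u_i)_\Omega \to u - (u)_\Omega$ in $L^2(\Omega)$. Matching this against the $L^q_{\rm loc}$-limit $U$ pins down the two limits up to a single global additive constant $a$, so $u - a = U \in L^q_f$.

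Finally, integrating the $L^1_f$ Sobolev/isoperimetric inequality along geodesic spheres yields the Euclidean-type volume lower bound $\mu_f(B_r(p)) \geq c\, r^{n+1}$. Since each unbounded $C_j$ contains balls of arbitrarily large radius, this forces $\mu_f(C_j) = \infty$, and combining with $u - a \equiv c_j - a$ on $C_j$ together with $u - a \in L^q_f$ forces $c_j = a$ for every unbounded $C_j$. Hence $v := u - a$ is supported inside the bounded set $B_R$, and $dv = \alpha$ establishes the required injectivity. The main technical obstacle I foresee is aligning the two modes of convergence of $\{u_i\}$---the global $L^q_f$-Cauchy bound (which produces a genuine integrable limit) and the local Poincar\'e-type $L^2_{\rm loc}$ convergence (which identifies that limit with $u$ up to an unknown additive constant); once these are reconciled, the Sobolev-driven volume lower bound turns the analytic approximation into the geometric statement that $u$ is asymptotically constant, and the conclusion follows.
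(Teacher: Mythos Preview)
Your proposal is correct and follows essentially the same route as the paper: promote the $L^1_f$ Sobolev inequality to the $L^2_f$ one with exponent $2(n+1)/(n-1)$, invoke Lemma~\ref{appro} to obtain a primitive $u$, and then use the infinite $f$-volume of the unbounded components of $M\setminus\supp\alpha$ to force $u$ (after subtracting a constant) to have compact support. You are in fact more careful than the paper on the two points you flag as obstacles---the paper simply asserts $u_i\to u$ in $L^{2\nu/(\nu-2)}_f$ without discussing the additive-constant ambiguity, and it cites \cite{IR17} for the infinite $f$-volume of ends rather than deriving it from the isoperimetric/volume-growth bound as you do.
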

\begin{proof} 
It is known by Impera-Rimoldi \cite{IR17} that every end has infinite $f$-volume as a consequence of the $L^1_f$ Sobolev inequality. Also applying the $L^1_f$ Sobolev inequality to the function $h=v^{\frac{2n}{n-1}}, v\in C^\infty_c(M)$, we obtain the validity of the $L^2_f$ Sobolev inequality 
\begin{align*}
||v||_{L^{\frac{2\nu}{\nu-2}}_f}\leq \mu(n)||dv||_{L^2_f}, 
\end{align*}
where $\nu=n+1>2$ and $\mu(n)>0$ is a constant depending only on $n$. 

Now, take a closed $1$-form $\a \in A^1_c(M)$ which is zero in $L^2_fH^1(M)$. Then there exists a sequence $(u_i)_i$ in $C^\infty_c(M)$ such that $du_i \to \a$ in $L^2_f$. Hence by Lemma \ref{appro}, there exists a function $u\in C^\infty(M)$ such that $\a = du$, and by the $L^2_f$ Sobolev inequality, $u_i \to u$ in $L^{\frac{2\nu}{\nu-2}}_f$. Since $du= \a = 0$ in $M\setminus \supp \a$, $u$ is constant on each connected component of $M\setminus \supp \a$. However, by the $L^2_f$ Sobolev inequality, $u$ must be zero on each unbounded connected component of $M$ since each end of $M$ has infinite $f$-volume. Hence $u$ has compact support and $\a$ is zero in $H^1_c(M)$.  
\end{proof} 

\subsection{Weighted $L^2$ harmonic forms and cycles}
Let $Z$ be a $p$-dimensional oriented, connected, compact manifold without boundary and $\phi:Z \to M^n$ be an embedding. The pair $(Z, \phi)$ is called a \textit{$p$-cycle} on $M$. We say that an $(n-1)$-cycle (or \textit{codimension one cycle}) $Z$ does not disconnect $M$ if $M\setminus \phi(Z)$ is connected. We consider a relation between $H^1_c(M)$ and codimension one cycles. 
\begin{proposition} \label{car}
Assume that there exists a codimension one cycle $Z$ which does not disconnect $M$. Then there exists a closed $1$-form $\a \in A^1_c(M)$ and a $1$-cycle $\g$ on $M$ such that 
\begin{align*}
\int_\g \a = 1. 
\end{align*}
Therefore $H^1_c(M)\neq \{0\}$. 
\end{proposition}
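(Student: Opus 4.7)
The plan is to construct a compactly supported closed $1$-form $\a$ that is a de Rham representative of the Poincar\'e dual of $\phi(Z)$, together with a loop $\g$ whose algebraic intersection number with $\phi(Z)$ is $1$; the equality $\int_\g \a = 1$ then follows from the standard transverse intersection formula, and the non-triviality of $[\a]$ in $H^1_c(M)$ will be immediate from Stokes' theorem. The hypothesis that $Z$ does not disconnect $M$ is exactly what allows the construction of such a $\g$.

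For $\a$, I would exploit the orientability of both $M$ and $Z$ to trivialize the normal bundle of $\phi(Z)$, giving a tubular neighborhood $N \subset M$ diffeomorphic to $Z \times (-\e, \e)$, with $\phi(Z)$ identified with $Z \times \{0\}$ and with normal coordinate $t$. Choose a smooth cutoff $\chi : \R \to [0,1]$ with $\chi \equiv 0$ on $(-\infty, -\e/2]$ and $\chi \equiv 1$ on $[\e/2, \infty)$, and set $\a := \chi'(t)\, dt$ on $N$ and $\a := 0$ on $M \setminus N$. Since $\supp \chi' \subset (-\e/2, \e/2)$ sits compactly inside $N$, the form $\a$ is smooth and compactly supported; since $\a = d(\chi \circ t)$ locally on $N$, it is closed. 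For the loop, pick any $p \in \phi(Z)$ and set $p_\pm := (p, \pm \e/2) \in N$; the connectedness of $M \setminus \phi(Z)$ supplies a smooth path $\s$ from $p_+$ to $p_-$ in $M \setminus \phi(Z)$, and concatenating with the normal segment $\tau : s \mapsto (p, s)$, $s \in [-\e/2, \e/2]$, yields a piecewise smooth loop which a small generic perturbation (for $n \geq 2$) turns into a smoothly embedded $1$-cycle $\g$ still crossing $\phi(Z)$ transversally exactly once; in the degenerate case $n = 1$ one necessarily has $M = S^1$ and takes $\g = M$.

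The computation $\int_\g \a$ splits as $\int_\tau \a + \int_\s \a$. On $\tau$ one gets $\int_\tau \a = \chi(\e/2) - \chi(-\e/2) = 1$. For $\s$, decompose it into maximal subarcs contained in $N$; since $\s$ avoids $\phi(Z)$, each such subarc stays entirely on one side $\{t > 0\}$ or $\{t < 0\}$ inside $N$, so its two $t$-endpoint values lie in $\{\e/2, \e\}$ or in $\{-\e/2, -\e\}$. On each subarc $\a = d(\chi \circ t)$, so the subarc integral equals $\chi(t_{\text{end}}) - \chi(t_{\text{start}})$; by construction $\chi \equiv 1$ on $[\e/2, \e]$ and $\chi \equiv 0$ on $[-\e, -\e/2]$, so each such difference vanishes. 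Summing, $\int_\s \a = 0$ and hence $\int_\g \a = 1$. If $\a = d\b$ for some $\b \in A^0_c(M)$ then Stokes' theorem would force $\int_\g \a = 0$, contradicting the computation, so $[\a] \neq 0$ in $H^1_c(M)$.

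The only mildly delicate step is the vanishing $\int_\s \a = 0$, which is the technical heart of the argument and reflects the content of Poincar\'e duality at the chain level: the $1$-form $\a$ really is the de Rham representative of the Poincar\'e dual of $Z$, so that pairing it with any $1$-cycle recovers the algebraic intersection count with $Z$. Everything else is standard tubular-neighborhood bookkeeping.
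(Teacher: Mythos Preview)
Your proof is correct and follows essentially the same tubular-neighborhood construction as the paper: both build the Thom form $\a$ of $\phi(Z)$ and a loop crossing $Z$ once, then compute $\int_\g \a = 1$. The one simplification in the paper's version is that it takes the connecting arc $c$ entirely in $M\setminus N_Z$ (using that $M\setminus N_Z$ is still connected, since it is a deformation retract of $M\setminus \phi(Z)$), so that $\a\equiv 0$ along $c$ and your subarc analysis for $\int_\s \a = 0$ becomes unnecessary.
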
 
\begin{proof}
This proof is deeply inspired by the one contained in the lecture note by Carron \cite{Car07}. Consider an embedding $F:Z \times [-1, 1] \to M$ with $F_p(0)=F(p, 0)=\phi(p)$ for any $p\in Z$, and set $N_Z=F(Z \times [-1, 1])$. Then $M\setminus N_Z$ is connected since $M\setminus \phi(Z)$ is connected by assumption. Now define a function $\rho:Z \times [-1, 1] \to \R$ by 
\begin{align*}
\rho(p, t)=\rho(t)=\begin{cases}
1 \quad (1/2\leq t\leq 1), \\
0 \quad (-1\leq t\leq -1/2). 
\end{cases}
\end{align*}
Clearly, $d\rho$ has a compact support on $Z\times (-1, 1)$. Hence we can extend $d(F^{-1})^*\rho$ to a closed $1$-form $\a$ defined on whole $M$ with compact support in $N_Z$. 

For a fixed point $p_0\in Z$, we can take a continuous curve $c:[0, 1] \to M\setminus N_Z$ joining $F(p_0, 1)$ and $F(p_0, -1)$ since $M\setminus N_Z$ is connected. Define a closed curve (a $1$-cycle) $\g$ on $M$ by 
\begin{align*}
\g(t)=\begin{cases}
F_{p_0}(t):=F(p_0, t)\quad (-1\leq t\leq 1), \\
c(t-1) \quad (1\leq t\leq 2). 
\end{cases} 
\end{align*}
It is not difficult to see that 
\begin{align*}
\int_{\g}\a=1, 
\end{align*}
hence $\a$ is not exact. 
\end{proof}

Combining Proposition \ref{sobolev} and Proposition \ref{car}, we immediately conclude the following. 
\begin{corollary}
Assume that $M$ supports the $L^1_f$ Sobolev inequality in Proposition \ref{sobolev} and admits a codimension one cycle which does not disconnect $M$, then $\mathcal{H}^1_f(M)\neq \{0\}$. 
\end{corollary}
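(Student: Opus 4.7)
The plan is to deduce this directly by chaining Proposition \ref{car} and Proposition \ref{sobolev}: the former manufactures a nontrivial compactly supported de Rham class out of a non-separating codimension one cycle, and the latter forwards that class injectively into the weighted harmonic cohomology.

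Concretely, I would first invoke Proposition \ref{car} with the given non-separating codimension one cycle $Z$. This produces a closed $1$-form $\alpha\in A^1_c(M)$ together with a $1$-cycle $\gamma$ on $M$ such that $\int_\gamma \alpha=1$. Since integration over closed curves is well defined on $H^1_c(M)$ (it factors through the pairing between compactly supported cohomology and singular homology), this immediately shows that the class $[\alpha]\in H^1_c(M)$ is nonzero; in particular $H^1_c(M)\neq\{0\}$.

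Next, the hypothesis on the $L^1_f$ Sobolev inequality is exactly the one required by Proposition \ref{sobolev}, which guarantees that the natural map
\begin{equation*}
H^1_c(M)\longrightarrow L^2_fH^1(M)\cong \mathcal{H}^1_f(M)
\end{equation*}
is injective. Applying this injectivity to the nontrivial class $[\alpha]$ produced above yields a nonzero element of $\mathcal{H}^1_f(M)$, so $\mathcal{H}^1_f(M)\neq\{0\}$, as desired.

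Since both ingredients are already established in the excerpt, there is no real obstacle here; the only point that deserves a moment's care is to verify that the pairing $[\alpha]\mapsto \int_\gamma \alpha$ is well defined on $H^1_c(M)$ (so that exactness of $\alpha$ would force $\int_\gamma\alpha=0$), but this is the standard de Rham--singular homology pairing and requires no new computation. The corollary therefore follows simply by composing the two propositions.
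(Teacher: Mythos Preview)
Your argument is correct and is exactly the approach the paper takes: the corollary is stated as an immediate consequence of combining Proposition \ref{car} (producing a nonzero class in $H^1_c(M)$) with the injectivity from Proposition \ref{sobolev}. There is nothing to add.
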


In \cite{IR17}, Impera and Rimoldi showed that every complete translator $x:M^n \to \R^{n+1}$ supports the $L^1_f$ Sobolev inequality when $n\geq 2$. Hence, in particular, we conclude the following. 
\begin{corollary} \label{tracyc}
If a complete translator $x:M^n \to \R^{n+1}$ admits a codimension one cycle which does not disconnect $M$, then $\mathcal{H}^1_f(M)\neq \{0\}$. 
\end{corollary}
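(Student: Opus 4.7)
The plan is to read off Corollary \ref{tracyc} as an immediate consequence of the preceding (unnumbered) corollary, with the sole additional input being that a complete translator satisfies the weighted $L^1_f$ Sobolev inequality. First I would invoke the theorem of Impera--Rimoldi \cite{IR17}, cited just above the statement, which asserts exactly this for $n \geq 2$. The hypotheses of the preceding corollary are then fulfilled, and its conclusion $\mathcal{H}^1_f(M) \neq \{0\}$ is precisely what we want.

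If I unpack the chain of implications, the scheme runs as follows. Starting from the codimension one cycle that fails to disconnect $M$, Proposition \ref{car} produces an explicit compactly supported closed $1$-form $\a$ together with a loop $\g$ satisfying $\int_\g \a = 1$, so $[\a] \neq 0$ in $H^1_c(M)$. Then Proposition \ref{sobolev} applies, so that $[\a]$ is sent injectively into $L^2_f H^1(M) \cong \mathcal{H}^1_f(M)$, producing the required nonzero weighted $L^2_f$ harmonic $1$-form.

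The case $n=1$ I would dispatch separately: a complete connected $1$-dimensional translator in $\R^2$ is a noncompact curve (a straight line or a grim reaper), and hence admits no non-disconnecting codimension one (\ie $0$-dimensional) cycle, so the statement holds vacuously. I foresee no genuine obstacle, since the only nontrivial ingredient --- the $L^1_f$ Sobolev inequality, from which one also derives the infinite $f$-volume of each end used in Proposition \ref{sobolev} --- is imported as a black box from \cite{IR17}, and the rest is the formal combination of the two propositions just proved.
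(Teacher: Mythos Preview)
Your proposal is correct and follows exactly the paper's approach: the paper derives Corollary \ref{tracyc} as an immediate specialization of the preceding corollary, the only extra input being the $L^1_f$ Sobolev inequality for complete translators from \cite{IR17} (stated there for $n\geq 2$). Your explicit treatment of the case $n=1$ is a small addition that the paper leaves implicit.
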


\subsection{Another method} 
We may adopt another proof of Corollary \ref{tracyc} without the Sobolev inequality. The following proposition is essentially due to Gaffney \cite{Gaf54} where he used the heat equation method. Most of his method can be applicable to our weighted Laplacian case straightforwardly since the method is based on general theory for self-adjoint operators. The only thing we need to do is to modify the proof such as Lemma \ref{appro} when we use the Poincar\'e duality. 
\begin{proposition} \label{gaf}
For any closed $p$-form $\a \in A^p_c(M)$ and any $p$-cycle $Z$ on $M$, 
\begin{align*}
\int_Z\a=\int_Z \mathbb{H}_f\a, 
\end{align*}
where $\mathbb{H}_f\a$ denotes the $f$-harmonic part of $\a$. 
\end{proposition}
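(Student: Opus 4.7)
The plan is to adapt the wedge-duality trick used in the proof of Lemma \ref{appro}, with Poincar\'e duality of the cycle $Z$ playing the role of plain de Rham duality. Setting $\b := \a - \H_f\a$, the goal is to show $\int_Z \b = 0$. Here $\b$ is a smooth closed $p$-form in $L^2_fA^p(M)$: closed because $\a$ is closed and every $L^2_f$ $f$-harmonic form on a complete manifold is both $d$- and $\delta_f$-closed (an integration-by-parts consequence of $\Delta_f = d\delta_f+\delta_fd$ on the domain of its self-adjoint extension).

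My first step would be to invoke the weighted Hodge--Kodaira orthogonal decomposition
\begin{align*}
L^2_fA^p(M) = \cH^p_f(M) \oplus \overline{dA^{p-1}_c(M)} \oplus \overline{\delta_fA^{p+1}_c(M)},
\end{align*}
valid on any complete manifold (\cite{Bue99}). Since $d\a = 0$ and $\a\in A^p_c(M)$, integration by parts gives $\lang \a, \delta_f\g\rang_{L^2_f} = \lang d\a, \g\rang_{L^2_f} = 0$ for every $\g\in A^{p+1}_c(M)$, and the analogous orthogonality for $\H_f\a$ holds by definition of the projection. Hence $\b$ lies in $\overline{dA^{p-1}_c(M)}$, so I obtain a sequence $(u_i)_i$ in $A^{p-1}_c(M)$ with $du_i \to \b$ in $L^2_f$.

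Next, using that $\phi : Z \hookrightarrow M$ is an embedded oriented closed submanifold of codimension $n-p$, I would appeal to classical Poincar\'e--Lefschetz duality to produce a closed compactly supported Thom-type form $\eta_Z\in A^{n-p}_c(M)$, supported in a tubular neighborhood of $\phi(Z)$, satisfying
\begin{align*}
\int_Z \phi^*\g = \int_M \g \wedge \eta_Z
\end{align*}
for every closed smooth $p$-form $\g$ on $M$. Taking $\g = \b$ reduces the problem to showing $\int_M \b \wedge \eta_Z = 0$. At this point I would mimic Lemma \ref{appro}: define $\t := (-1)^{p(n-p)}e^f\star_g\eta_Z\in A^p(M)$, which is compactly supported and hence in $L^2_fA^p(M)$. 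Using $\dvol_M = e^f\dvol_f$ and $\star_g^2 = (-1)^{p(n-p)}$ on $p$-forms, one verifies the pointwise identity $\g\wedge\eta_Z = (\g,\t)_g\dvol_f$, so $\int_M \g \wedge \eta_Z = \lang \g, \t\rang_{L^2_f}$ for all $\g\in L^2_fA^p(M)$. Consequently
\begin{align*}
\int_M \b \wedge \eta_Z = \lang \b, \t\rang_{L^2_f} = \lim_{i\to\infty} \lang du_i, \t\rang_{L^2_f} = \lim_{i\to\infty} \int_M du_i \wedge \eta_Z = 0,
\end{align*}
the last equality being Stokes' theorem applied to $d(u_i\wedge \eta_Z) = du_i\wedge\eta_Z$, since $d\eta_Z = 0$ and $u_i\wedge \eta_Z \in A^{n-1}_c(M)$.

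The main obstacle I foresee is the construction of the Poincar\'e dual $\eta_Z$ and the verification of its wedge-integration identity for closed $p$-forms; this is classical (a Thom form on the normal bundle of $Z$, as in Bott--Tu) but must be set up carefully, and it is precisely what forces the statement to be phrased in terms of the closed form $\b$ from the outset. Beyond this point, the proof is essentially a transcription of the unweighted Gaffney argument into the $L^2_f$ setting, with the weighted Hodge decomposition doing all the analytic heavy lifting.
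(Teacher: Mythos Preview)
Your argument is correct. The weighted Hodge--Kodaira decomposition places $\b=\a-\H_f\a$ in $\overline{dA^{p-1}_c(M)}$, and the Thom-form duality together with the pointwise identity $\g\wedge\eta_Z=(\g,\t)_g\dvol_f$ lets you pass the $L^2_f$ limit $du_i\to\b$ through $\int_Z$, exactly as in Lemma~\ref{appro}. The construction of $\eta_Z$ is indeed standard (Bott--Tu), and your sign bookkeeping checks out.

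The paper does not spell out a proof but points instead to Gaffney's heat-equation method: one shows $t\mapsto\int_Z e^{-t\Delta_f}\a$ is constant (its $t$-derivative is $-\int_Z d\delta_f e^{-t\Delta_f}\a=0$ since $\a$ is closed), and then lets $t\to\infty$ to reach $\H_f\a$. The passage to the limit over $Z$ is exactly where the Poincar\'e-duality trick of Lemma~\ref{appro} enters, so both routes share that ingredient. Your approach trades the heat-semigroup analysis for the Hodge decomposition (already available from \cite{Bue99}), which is arguably more direct and avoids any discussion of large-time asymptotics; Gaffney's route, on the other hand, yields the harmonic projection constructively and does not require knowing the full three-summand decomposition in advance. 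Either way the analytic core is the same: convert integration over $Z$ into an $L^2_f$ pairing against a compactly supported dual form so that $L^2_f$ convergence suffices.
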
 

Combining Proposition \ref{car} and Proposition \ref{gaf} we recover the result by Dodziuk \cite{Dod82} in our weighted case. Note that $M$ does not need to be a translator in the following result. 
\begin{corollary} \label{dodz}
Assume that $M$ admits a codimension one cycle $Z$ which does not disconnect $M$, then $\mathcal{H}^1_f(M)\neq \{0\}$. 
\end{corollary}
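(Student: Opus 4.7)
The plan is to chain together Proposition \ref{car} and Proposition \ref{gaf} directly, using the latter to promote a nontrivial compactly supported cohomology class to a nonzero weighted harmonic form.

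First, I would invoke Proposition \ref{car}. By hypothesis, $M$ admits a codimension one cycle $Z$ that does not disconnect $M$, so the proposition produces a closed compactly supported $1$-form $\alpha \in A^1_c(M)$ together with a $1$-cycle $\gamma$ on $M$ satisfying
\begin{equation*}
\int_\gamma \alpha = 1.
\end{equation*}
In particular, $\alpha$ is not exact in $H^1_c(M)$, but more importantly, the pairing with $\gamma$ gives a concrete nonzero number to exploit.

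Next, I would apply Proposition \ref{gaf} to the closed form $\alpha \in A^1_c(M)$ and the $1$-cycle $\gamma$. This yields
\begin{equation*}
1 = \int_\gamma \alpha = \int_\gamma \mathbb{H}_f \alpha,
\end{equation*}
where $\mathbb{H}_f \alpha$ denotes the $f$-harmonic part of $\alpha$. Since the right hand side is nonzero, the form $\mathbb{H}_f \alpha$ cannot vanish identically. By construction $\mathbb{H}_f \alpha \in \mathcal{H}^1_f(M)$, and therefore $\mathcal{H}^1_f(M) \neq \{0\}$.

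There is really no obstacle to speak of here; the entire content has been packaged into the two propositions being cited. The only conceptual step is to recognize that the topological input from Proposition \ref{car} (a $1$-cycle that pairs nontrivially with $\alpha$) is exactly what one needs to detect nontriviality of $\mathbb{H}_f\alpha$ via the integration identity in Proposition \ref{gaf}. This is the weighted analogue of Dodziuk's argument, and the argument is independent of whether $M$ supports any Sobolev inequality, which is why it provides an alternative route to Corollary \ref{tracyc} valid in dimension $n=1$ as well.
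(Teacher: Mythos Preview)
Your proof is correct and follows exactly the approach indicated in the paper, which simply states that the corollary follows by combining Proposition \ref{car} and Proposition \ref{gaf}. You have spelled out precisely the intended chain: Proposition \ref{car} supplies $\alpha$ and $\gamma$ with $\int_\gamma \alpha = 1$, and Proposition \ref{gaf} then forces $\int_\gamma \mathbb{H}_f\alpha = 1$, so $\mathbb{H}_f\alpha \neq 0$.
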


\section{$f$-stability and weighted $L^2$ harmonic $1$-forms} 
In this section, we prove the main theorem: 
\begin{theorem} \label{mainth}
Any complete $f$-stable translator $x:M^n \to \R^{n+1}$ admits no codimension one cycle which does not disconnect $M$. 
\end{theorem}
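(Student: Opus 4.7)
The plan is to produce a non-trivial $L^2_f$ $f$-harmonic $1$-form from the hypothetical cycle and then contradict $f$-stability through a weighted Bochner identity. Assume for contradiction that $M$ admits a codimension one cycle which does not disconnect it. Corollary \ref{tracyc} then furnishes a non-zero $\omega \in \mathcal{H}^1_f(M)$. The crucial preliminary computation is that the Bakry--\'Emery Ricci tensor of a translator satisfies $\Ric_f(X,Y) := \Ric(X,Y) + \Hess(f)(X,Y) = -\langle SX, SY\rangle$, where $S$ is the shape operator. Indeed, writing $T = T^{\top} + h\nu$ with $h = \langle T, \nu\rangle$, differentiation of the parallel vector field $T$ in $\R^{n+1}$ yields $\nabla_X T^{\top} = h\,SX$, so that $\Hess(f)(X,Y) = -h\langle SX, Y\rangle$ exactly cancels the $h\langle SX, Y\rangle$ term produced by the Gauss equation for $\Ric$. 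Consequently the weighted Bochner formula for the $f$-harmonic $\omega$ becomes
\begin{equation*}
\frac{1}{2}\Delta_f |\omega|^2 = |\nabla\omega|^2 - |S\omega^{\sharp}|^2.
\end{equation*}

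In codimension one the normal bundle is trivialized by $\nu$ with $\nap\nu = 0$, so the stability inequality \eqref{stab} reduces to $\int_M(|\nabla v|^2 - |A|^2 v^2)\,\dvol_f \geq 0$ for every $v \in C^{\infty}_c(M)$. The strategy is to test this with $v = \phi\,|\omega|$, regularised as $\phi\sqrt{|\omega|^2 + \e^2}$ to handle the zeros of $\omega$, expand $|\nabla(\phi|\omega|)|^2$, integrate by parts the cross term to produce $\Delta_f(|\omega|^2)$, and substitute the Bochner identity above. After rearrangement and the limit $\e \to 0$, one arrives at the schematic inequality
\begin{equation*}
\int_M \phi^2\bigl(|A|^2|\omega|^2 - |S\omega^{\sharp}|^2\bigr)\dvol_f + \int_M \phi^2\bigl(|\nabla\omega|^2 - |\nabla|\omega||^2\bigr)\dvol_f \leq \int_M |\omega|^2 |\nabla\phi|^2\,\dvol_f,
\end{equation*}
both left-hand integrands being non-negative by Cauchy--Schwarz and by the Kato inequality, respectively. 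Choosing $\phi = \phi_r$ a standard cutoff on geodesic balls with $|\nabla\phi_r| \leq C/r$ and using $|\omega| \in L^2_f$, the right-hand side tends to zero as $r \to \infty$, forcing both left-hand integrals to vanish.

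The vanishing of the second yields $|\nabla\omega|^2 = |\nabla|\omega||^2$ pointwise. Invoking the refined Kato inequality $|\nabla\omega|^2 \geq \tfrac{n}{n-1}|\nabla|\omega||^2$ for harmonic $1$-forms on an $n$-manifold with $n \geq 2$ then forces $\nabla|\omega| = 0$ wherever $\omega \neq 0$, and unique continuation for the elliptic equation satisfied by $\omega$ promotes this to $|\omega| \equiv c > 0$ on all of $M$. But then $\int_M|\omega|^2\,\dvol_f = c^2\,\Vol_f(M) < \infty$, contradicting the infinite $f$-volume of $M$ guaranteed by $f$-stability via the $L^1_f$ Sobolev inequality of Impera--Rimoldi (as used in the proof of Proposition \ref{sobolev}). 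The case $n = 1$ is vacuous: a codimension one cycle is then a single point, whose removal disconnects any non-compact $1$-manifold, and no compact translator curve exists because a translator must be invariant under a non-trivial translation of $\R^{n+1}$.

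The main obstacle is the technical verification that the regularization $\phi\sqrt{|\omega|^2 + \e^2}$ actually produces the stated inequality in the limit, together with the justification of the integration by parts in the cross term (for which one relies on the smoothness of $f$-harmonic forms recorded in Section~2). Once the Bochner identity and cutoff machinery are in place, the passage through refined Kato and infinite weighted volume is standard.
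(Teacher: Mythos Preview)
Your argument tracks the paper's exactly through the Bochner/stability computation: both arrive at the vanishing of the two non-negative quantities
\[
W(\xi)=|A|^2|\omega|^2-|S\omega^\sharp|^2\quad\text{and}\quad K(\xi)=|\nabla\omega|^2-|\nabla|\omega||^2
\]
on all of $M$. The divergence is in the endgame. The paper exploits $W\equiv 0$: at a point where $\omega\neq 0$ it forces the second fundamental form to have rank at most one, hence the scalar curvature vanishes, and then the Mart\'in--Savas-Halilaj--Smoczyk classification (hyperplane or grim reaper cylinder) together with Jordan--Brouwer yields the contradiction. You instead exploit $K\equiv 0$ via the refined Kato inequality to force $|\omega|$ constant, and then contradict the infinite weighted volume.

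The gap is that the refined Kato inequality $|\nabla\omega|^2\ge \tfrac{n}{n-1}|\nabla|\omega||^2$ requires $\omega$ to be \emph{harmonic}, i.e.\ both $d\omega=0$ and $\delta\omega=0$, because its proof uses that the symmetric $2$-tensor $\nabla\omega$ is traceless. Here $\omega$ is only $f$-harmonic: $\delta_f\omega=0$ gives $\delta\omega=-\iota_{\nabla f}\omega$, so the trace of $\nabla\omega$ equals $\omega(\nabla f)$, which has no reason to vanish on a translator (where $\nabla f=-T^\top$). Without the trace condition the refined constant genuinely fails---for instance the symmetric tensor $e_1\otimes e_1$ satisfies equality in ordinary Kato but violates the $\tfrac{n}{n-1}$ bound. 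Thus from $K\equiv 0$ alone you cannot conclude $\nabla|\omega|=0$, and the route through constant $|\omega|$ and infinite $f$-volume does not close. The paper's use of $W\equiv 0$ sidesteps this issue entirely by reading off geometric information about $A$ rather than analytic information about $\omega$.
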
 
\begin{proof}
This proof is inspired by the works of Palmer \cite{Pal91} or Miyaoka \cite{Miy93} for stable minimal hypersurfaces. 

We reason by contradiction. Assume that there exists a codimension one cycle which does not disconnect $M$. Then by Corollary \ref{dodz} (or Corollary \ref{tracyc}), we can take a non-trivial $L^2_f$ $f$-harmonic $1$-form $\a\in \mathcal{H}^1_f(M)$. Let $\xi\in TM$ be the dual vector field of $\a$ with respect to $g$.  Note that the weighted Laplacian can be computed as $\Delta_f=\Delta+L_{\na f}$, where $L_{\na f}$ denotes the Lie derivative. Using Weitzenb\"ock formula and $\Delta_f\a=0$, we have 
\begin{align*}
2|\xi|\Delta_f|\xi|-2|\na |\xi||^2=\Delta_f|\xi|^2=-2\Ric_f(\xi,\xi)-2|\na \xi|^2, 
\end{align*}
where $\Ric_f:=\Ric+\Hess_f$ is Bakry-\'Emery Ricci tensor on $M$. 
Set  
\begin{align*}
K(\xi)&:=|\na \xi|^2-|\na|\xi||^2, \\
W(\xi)&:=|A|^2|\xi|^2-\lang S(\xi), S(\xi)\rang, 
\end{align*}
where $S$ is the shape operator of the hypersurface. 
Note that $K(\xi)\geq 0$ by Kato's inequality. Also $W(\xi)\geq 0$ in general. Using the Gauss equation, we compute 
\begin{align} \label{kato}
|\xi|\Delta_f|\xi|&= -\Ric_f(\xi, \xi)-K(\xi)\\
&=\lang S(\xi), S(\xi)\rang-K(\xi) \nonumber \\
&=|A|^2|\xi|^2-W(\xi)-K(\xi).  \nonumber
\end{align}

Now we take a cutoff function 
$\eta\in C^\infty(M)$ which has the property 
\begin{enumerate}
\item[(a)] $0\leq \eta \leq 1$, 
\item[(b)] $\eta\equiv 1$ on $B_{\frac{R}{2}}$, \quad $\eta\equiv 0$ outside $B_R$, 
\item[(c)] $|\na \eta|^2\leq \displaystyle\frac{C}{R^2}$, \quad $C$ is a constant independent of $R$,  
\end{enumerate}
where $B_R$ is a geodesic ball on $M$ with radius $R>0$. Consider a variation vector field as $V=\eta|\xi|\nu$, where $\nu$ is a unit normal of the hypersurface. From the stability inequality \eqref{stab}, the equality \eqref{kato} and Stokes' theorem $\lang \Delta_fu, v \rang_{L^2_f}=\lang du, dv\rang_{L^2_f}$ for $u, v\in C^\infty_c(M)$, we compute 
\begin{align*}
0&\leq \int_{M}|\nap V|^2-|A|^2|V|^2\dvol_f\\
&=\int_M |\na(\eta|\xi|)|^2-|A|^2\eta^2|\xi|^2\dvol_f\\
&=\int_M \eta|\xi|\Delta_f(\eta|\xi|)-|A|^2\eta^2|\xi|^2\dvol_f\\
&=\int_M \eta|\xi|(\eta\Delta_f|\xi|+|\xi|\Delta_f \eta-2\lang \na \eta, \na |\xi|\rang)-|A|^2\eta^2|\xi|^2\dvol_f\\
&=\int_M \eta^2 (|\xi|\Delta_f|\xi|-|A|^2|\xi|^2)-2\eta|\xi|\lang \na |\xi|,\na \eta\rang+(|\xi|^2\eta)\Delta_f \eta\dvol_f\\
&= \int_M -\eta^2 (W(\xi)+K(\xi))+|\xi|^2|\na \eta|^2\dvol_f\\
&\leq \int_{B_{\frac{R}{2}}} -(W(\xi)+K(\xi))\dvol_f + \frac{C}{R^2}\int_{B_R}|\xi|^2\dvol_f. 
\end{align*}
Letting $R \to \infty$ and using $\a \in L^2_fA^1(M)$, we conclude that $W(\xi)\equiv 0$ and $K(\xi)\equiv 0$ on whole $M$. Since $\xi$ is not identically zero, we choose a point $p\in M$ where $|\xi(p)|\neq 0$. We may take a sufficiently small neighborhood $U$ of $p$ on which $|\xi|>0$ holds. Let $\{e_1, \cdots, e_n\}$ be an orthonormal frame on $U$ such that $e_1=\xi/|\xi|$. Then $W(\xi)\equiv 0$ implies 
\begin{align*}
|A|^2=\lang S(e_1), S(e_1)\rang=\lang A(e_1, e_i), A(e_1, e_i) \rang=a_{11}^2+\cdots +a_{1n}^2,  
\end{align*}
where $a_{ij}=A(e_i, e_j)$. Since $A$ is symmetric, $a_{ij}=0$ except for $a_{11}$. It follows that the scalar curvature of $M$ must be zero. However it is known that a zero scalar curvature translator is either a hyperplane or a grim reaper cylinder by Mart\'in, Savas-Halilaj and Smoczyk (Corollary 2.1 in \cite{MSS15}). This contradicts our assumption since both of them do not have codimension one cycle which does not disconnect $M$ by the Jordan-Brouwer separation theorem. 
\end{proof}
\begin{remark}
Our proof of the main theorem may be used to prove Proposition \ref{1end} in a slightly different way. In fact, if a complete Riemannian manifold $M$ has at least two ends, then $H^1_c(M)\neq \{0\}$ (see the lecture note by Carron \cite{Car07} for the proof). In particular, for a translator $x:M^n \to \R^{n+1}$, $\mathcal{H}^1_f(M)\neq \{0\}$ by Proposition \ref{sobolev}. However, as in Theorem \ref{mainth}, this is in contradiction with the $f$-stability assumption.  
\end{remark}

\end{document}